\documentclass[3pt]{article}

\usepackage{amsmath,amssymb,amsfonts,bbm,amsthm}
\usepackage{color}
\usepackage{pdfpages}
\usepackage{graphicx}
\usepackage{geometry}
\usepackage{url}
\usepackage{enumerate}
\usepackage{extarrows}
\usepackage{extarrows,booktabs,multirow,diagbox}

\theoremstyle{plain}
\newtheorem{thm}{\bf Theorem}[section]

\newtheorem{lem}[thm]{\bf Lemma}

\theoremstyle{remark}





\newtheorem{rem}{Remark}

\newcommand{\Rg}       {{\hbox{I\kern-.22em\hbox{R}}}}
\newcommand{\Pg}       {{\hbox{I\kern-.22em\hbox{P}}}}
\newcommand{\Eg}       {{\hbox{I\kern-.22em\hbox{E}}}}

%

%
%


\title{Controlled Mean-Reverting Estimation for The AR(1) Model with Stationary Gaussian Noise}

\author{
Chunhao Cai\\
School of Mathematics, Shanghai University of Finance and Economics, Shanghai, China. \\
Email: caichunhao@mail.shufe.edu.cn}
\date{\today}

\begin{document}
\maketitle

\begin{abstract}
This paper deals with the maximum likelihood estimator for the mean-reverting parameter of a first order autoregressive models with exogenous variables, which are stationary Gaussian noises (Colored noise). Using the method of the Laplace transform, both the asymptotic properties and the asymptotic design problem of the  maximum likelihood estimator are investigated. The numerical simulation results confirm the theoretical analysis and show that the proposed maximum likelihood estimator performs well in finite sample.\\
\vspace{3mm}

\textbf{Key words}: Laplace Transform; fractional Gaussian noise; Optimal input

\end{abstract}

\section{Introduction}\label{section 1}
Temporal dependence in volatility has been one of the most studied problems in financial econometrics. For example,  Gatheral et \textit{a.l.} \cite{GJR18} has presented that the volatility is rough with some real data and statistical tools. Comte and Renault \cite{CR98} also takes the following expression:
\begin{eqnarray*}
dy_t&=&\sigma^*e^{h_t/2}dW_t\\
dh_t&=&\gamma h_tdt+v^*dB_t^H
\end{eqnarray*}
where $y_t$ is the log price of an asset at time $t$, $W_t$ is a standard Brownian motion and $B_t^H$ is a fractional Brownian motion whose covariance function is 
$$
\mathbf{E}(B_t^H B_s^H)=\frac{1}{2}(t^{2H}+s^{2H}-|t-s|^{2H}),\,\, s,t\in [0,T].
$$
If we apply the Euler approximation with discrete time $t=\Delta,\,2\Delta,\, \cdots,\,N\Delta(=T) $ to the volatility $h_t$, then the volatility $h_t$ has the discrete-time model
$$
h_{i\Delta}=\beta h_{(i-1)\Delta}+v\eta_{i\Delta}^H.
$$
where $\beta=1+\gamma \Delta$, $\eta_{i\Delta}^H=B_{i\Delta}^H-B_{(i-1)\Delta}^H$ is a fractional Gaussian noise with distance $\Delta$. If $v=\Delta=1$, $\gamma$ is a negative constant such that $|\beta|<1$ then it is a AR(1) process with fractional Gaussian noise. Regarding that the fractional Gaussian noise is stationary Gaussian noise, we can extend this model and rewrite it as 
\begin{equation}\label{eq: or model}
X_n=\vartheta X_{n-1}+\xi_n,\, |\vartheta|<1
\end{equation}
and $\xi=(\xi_n,\,n\in \mathbb{Z})$ is the centred regular stationary Gaussian noise with
\begin{equation}\label{spectral density}
\int_{-\pi}^{\pi}|\log f_{\xi}(\lambda)|d\lambda<\infty\,,
\end{equation}
where $f_{\xi}(\lambda)$ is the spectral density of $\xi$. Brouste \textit{et a.l.} found the MLE for the unknown parameter $\vartheta$ and proved its consistency. 

In the sense of continuous observation, Brouste and Cai \cite{BC13} considered the controlled drift estimation problem in the fractional O-U process:
\begin{equation}\label{eq: control OU}
dX_t=-\vartheta X_tdt+u(t)dt+dB_t^H,\, \vartheta>0,\, t\in [0,T]
\end{equation}
where $B_t^H$ is a fractional Brownian motion with Hurst parameter $H\in (0,1)$ and $u(t)$ is a deterministic function in a control space. They have found a function in the control space which  maximize the Fisher Information and also obtained the consistency of the MLE of $\vartheta$. 

Still applying the Euler approximation in the model \eqref{eq: control OU} and extend it to the general stationary Gaussian noise, in this paper we will consider a similar model 
\begin{equation}\label{model}
X_n=\vartheta X_{n-1}+u(n)+\xi_n,\,0< |\vartheta|<1, X_0=0\,,
\end{equation}
where $u(n)$ denotes a deterministic function of $n$ and $\xi_n$ is the same noise defined in \eqref{eq: or model}. When considering the problem of estimating the unknown parameter $\vartheta$ with the observation data $X^{(N)}=\left(X_n,\, n=1,\cdots, N\right)$, we denote $L(\vartheta, X^{(N)})$  the likelihood function for $\vartheta$, then the Fisher information can be written as
$$
 \mathcal{I}_N(\vartheta,u)=-\mathbf{E}_{\vartheta}\frac{\partial^2}{\partial \vartheta^2}\ln L(\vartheta,X^{(N)}).
$$
Let $\mathcal{U}_N$ be some control space defined in \eqref{eq: contral space}, therefor we want to find the function such that 
$$
\mathcal{J}_N(\vartheta)=\sup_{u\in \mathcal{U}_N}\mathcal{I}_N(\vartheta,u)\,.
$$
and then find an adapted estimator $\bar{\vartheta}_N$ of the parameter $\vartheta$, which is asymptotically efficient  in the sense that, for any compact $\mathbb{K}\in (0,1)=\{\vartheta|0<\vartheta<1\}$,
\begin{equation}\label{efficiency}
\sup_{\vartheta \in \mathbb{K}}\mathcal{J}_N(\vartheta)\mathbf{E}_{\vartheta}(\bar{\vartheta}_N-\vartheta)^2=1+o(1),
\end{equation}
as $N\rightarrow \infty$.

\begin{rem}
This model is different from the ARX model of Ljung (\cite{Ljung}, Page 73). We present here a direct generalization of \cite{BCK} and \cite{BC13}. We can see that the control space $\mathcal{U}_N$ of \eqref{eq: contral space} is so nearby of that in \cite{BC13}.
\end{rem}

\begin{rem}
In this paper, we suppose that the covariance structure of the noise $\xi$ is known. In fact, if this covariance depends only on one parameter, for example, the Hurst parameter H of fractional Gaussian noise presented in Section \ref{simulation} we can estimate this parameter with the log-peridogram method \cite{Robinson95} or with generalized quadratic variation \cite{IL97} and study the plug-in estimator. This will be our future study.
\end{rem}

\begin{rem}
In this paper, we will not estimate the function $u(n)$ but just find one function which maximize the Fisher Information.
\end{rem}

The organization of this paper is as follows. In Section \ref{section 2}, we give some basic result of the Regular stationary noise $\xi_n$, find the likelihood function and the formula of the Fisher Information. Section \ref{section 3} provides the main results of this paper and Section \ref{simulation} shows some simulation examples to show the performance of the proposed MLE.  All the proofs are collected in the Appendix.

\section{Preliminaries and Notations}\label{section 2}

\subsection{Stationary Gaussian sequences}
Let us suppose that the covariance of the real valued centered stationary Gaussian sequence $\xi=\left(\xi_n\right)_{n\geq 1}$ 
\begin{equation}\label{eq: rho}
\mathbf{E}\xi_m\xi_n=c(m,n)=\rho(|n-m|),\, \rho(0)=1\,,
\end{equation}
is positive defined, then there exist an associated innovation sequence $\left(\sigma_n\varepsilon_n \right)_{n\geq 1}$ where $\varepsilon_n \sim \mathcal{N}(0,1)$, $n\geq 1$ are independent, defined by the following relations:
$$
\sigma_n \varepsilon_n=\xi_1,\,\ \sigma_n \varepsilon_n=\xi_n-\mathbf{E}(\xi_n|\xi_1,\xi_2,\dots, \xi_{n-1}),\, n\geq 2.
$$
It follows from the theorem of Normal Correlation (Theorem 13.1, \cite{Liptser}) that there exists a deterministic kernel $k=(k(n,m),\,n\geq 1,\, m\leq n)$ such that $k(n,n)=1$ and 
\begin{equation}\label{sigma}
\sigma_n\varepsilon_n=\sum_{m=1}^nk(n,m)\xi_m.
\end{equation}
For $n\geq 1$, we will denote by $\beta_{n-1}$ the partial correlation coefficient
\begin{equation}\label{beta}
\beta_{n-1}=-k(n,1).
\end{equation}
The following relationship between $k(\cdot,\,\cdot)$, the covariance function $\rho(\cdot)$ defined in \eqref{eq: rho}, the sequence of partial correlation coefficients $(\beta)_{n\geq 1}$ and the variances of innovation $(\sigma_n^2)_{n\geq 1}$ (see Levinson-Durbin algorithm \cite{Durbin})
\begin{equation}\label{rela 1}
\sigma_n^2=\prod_{m=1}^{n-1}(1-\beta_m^2),\, n\geq 2,\, \sigma_1=1,
\end{equation}
\begin{equation}\label{rela 2}
\sum_{m=1}^n k(n,m)\rho(m)=\beta_n\sigma_n^2,
\end{equation}
\begin{equation}\label{rela 3}
k(n+1, n+1-m)=k(n,n-m)-\beta_n k(n,m)
\end{equation}
Since we assume the positive definiteness of the covariance $c(\cdot, \cdot)$, there also esists an inverse deterministic kernel $K=(K(n,m),\, n\geq 1,\, m\leq n)$ such that 
\begin{equation}\label{eq: Inverse}
\xi_n=\sum_{m=1}^n K(n,m)\sigma_m\varepsilon_m.
\end{equation}\
The relationship of the kernel $k$ and $K$ can be found in \cite{BCK}.

\begin{rem}
It is worth mentioning that the condition \eqref{spectral density} implies that 
$$
\sum_{n\geq 1} \beta_n^2<\infty.
$$
This condition ies theoretically verified for classical ARMA noises. There has been no explicit form of the 
partial autocorrelation coefficients for the fractional Gaussian noise but we can numerically verify this condition. For the very similar fractional ARIMA processes, it has been proved that $\beta_n=O(1)$ in \cite{Hosking81}.
\end{rem}

\subsection{Model Transformation}
Let us define the process $Z=(Z_n,\, n\geq 1)$ such that
\begin{equation}\label{eq:Z}
Z_n=\sum_{m=1}^nk(n,m) X_m,\, n\geq 1\,,
\end{equation}
where $k(n,m)$ is the kernel defined in \eqref{sigma}. Similar to \eqref{eq: Inverse}, we have
$$
X_n=\sum_{m=1}^n K(n,m)Z_m\,.
$$

It is worth mentioning that the process $Z$ has the same filtration of $X$. In the following parts, let the observation be $(Z_1,\, Z_2,\,\cdots,\, Z_N)$. Actually, it was shown in \cite{BCK} the process $Z$ can be considered as the first component of a $2$-dimensional AR(1) process $\zeta=\zeta_n,\, n\geq 1$, which is defined by:
$$
\zeta_n=\left(\begin{array}{c}Z_n \\ \sum\limits_{r=1}^{n-1}\beta_rZ_r    \end{array}\right)\,.
$$

It is not hard to obtain that $\zeta_n$ is a $2$-dimensional Markov process which satisfies the following equation:
\begin{equation}\label{eq:zeta}
\zeta_n=A_{n-1}\zeta_{n-1}+bv(n)+b\sigma_n\epsilon_n,\, n\geq 1,\, \zeta_0=\mathbf{0}_{2\times 1}\,,
\end{equation}
with
\begin{equation}\label{eq:An and b}
A_n=\left(\begin{array}{cc}\vartheta & \vartheta\beta_n \\\beta_n & 1\end{array}\right),\, b=\left(\begin{array}{c}1 \\0\end{array}\right)
\end{equation}
and $\epsilon_n\sim\mathcal{N}(0,1)$ are independent. Following from the idea of \cite{BC13} we will define the control space $\mathcal{V}_N$ of the function $v(n)$:
$$
\mathcal{V}_N=\left\{v\big| \frac{1}{N}\sum_{n=1}^N\left\|\frac{v(n)}{\sigma_{n+1}}  \right\|^2\leq 1   \right\}\,.
$$
From the control space of $\mathcal{V}_N$ we can define that for the function $u(n)$:
\begin{equation}\label{eq: contral space}
\mathcal{U}_N=\left\{u(n)\big| \frac{1}{N}\sum_{n=1}^{N}\left\|\frac{\sum\limits_{m=1}^nk(n,m)u(m)}{\sigma_{n+1}}  \right\|^2\leq 1  \right\}\,,
\end{equation}

\subsection{Fisher Information}
As we have interpreted, the observation will be the first component of the process $\zeta=(\zeta_n,\, n\geq 1)$. Now from the equation \eqref{eq:zeta}, it is easy to write the Likelihood function $L(\vartheta,X^{(N)})$, which depends on the function $v(n)$:
\begin{equation}\label{likelihood function}
L(\vartheta,X^{(N)})=\prod_{n=1}^N\frac{1}{\sqrt{2\pi\sigma_n^2}}
\exp\left(-\frac{1}{2}\sum_{n=1}^N
\left(\frac{b^*\left(\zeta_n-A_{n-1}^{\vartheta}\zeta_{n-1}\right)}
{\sigma_n}\right)^2\right).
\end{equation}

Consequently, the Fisher Information $\mathcal{I}_N(\vartheta, v)$ can be written as
\begin{equation}\label{Fisher Information 1}
\mathcal{I}_N(\vartheta,v)=-\mathbf{E}_{\vartheta}\frac{\partial^2}{\partial \vartheta^2}\ln L(\vartheta, X^{(N)})=\mathbf{E}_{\vartheta}\sum_{n=1}^{N-1}\left(\frac{a_{n}^*\zeta_{n}}{\sigma_{n+1}}\right)^2,
\end{equation}
where $a_n=\left(\begin{array}{c}1 \\\beta_n\end{array}\right)$.

\section{Main Results}\label{section 3}
In this part, we will present our main Results in this paper. First of all, from the presentation of the Fisher Information \eqref{Fisher Information 1} we have 
\begin{thm}\label{optimal input}
The asymptotical optimal input in the class of control $\mathcal{U}_T$ is $u_{opt}(n)=\sum\limits_{m=1}^nK(n,m)\sigma_{m+1}$ for $0<\vartheta<1$ and $u_{opt}(n)=(-1)^n\sum\limits_{m=1}^nK(n,m)\sigma_{m+1}$ or $u_{opt}(n)=(-1)^{n+1}\sum\limits_{m=1}^nK(n,m)\sigma_{m+1}$ for $-1<\vartheta<0$. Moreover,
$$
\lim_{N\rightarrow \infty}\frac{\mathcal{J}_N(\vartheta)}{N}=\mathcal{I}(\vartheta)\,,
$$
where $\mathcal{I}(\vartheta)=\frac{1}{1-\vartheta^2}+\frac{1}{(1-\vartheta)^2}$ for $0<\vartheta<1$ and $\mathcal{I}(\vartheta)=\frac{1}{1-\vartheta^2}+\frac{1}{1+\vartheta}^2$.
\end{thm}

\begin{rem}\label{rem2}
The theorem \ref{optimal input} can be generalized to the AR(p) case with the norm of the matrix of Fisher Information, but this purpose is not so clear as well as as AR(1) that is: when the Fisher Information is larger, the error will be smaller. For this reason, we will illustrate only the result of first order but not order p.
\end{rem}

From Theorem \ref{optimal input}, since the optimal input does not depend on the unknown parameter $\vartheta$, we can consider $\bar{\vartheta}$ as the MLE $\hat{\vartheta}_N$. The following theorem states that $\hat{\vartheta}_N$ will reach the efficiency of \eqref{efficiency}.

\begin{thm}\label{asymptotical normal}
With the optimal input $u_{opt}(n)$ defined in Theorem \ref{optimal input}, for $0<|\vartheta|<1$, the MLE $\hat{\vartheta}_N$ has the following properties:
\begin{itemize}
\item $\hat{\vartheta}_N$ is strong consistency, that is $\hat{\vartheta}_N\overset{a.s.}{\rightarrow }\vartheta$ as $N\rightarrow\infty$.

\item $\hat{\vartheta}_N$ is uniformly consistent on compact $\mathbb{K}\subset \mathbb{R}$, \textit{i.e.} for any $\nu>0$
$$
\lim_{N\rightarrow \infty}\sup_{\vartheta \in \mathbb{K}}\mathbf{P}_{\vartheta}^N\left\{\left|\hat{\vartheta}_N-\vartheta\right|>\nu\right\}=0
$$

\item $\hat{\vartheta}_N$ is uniformly on compacts asymptotically normal, i.e., as $N\rightarrow\infty$,
$$
\lim_{N\rightarrow \infty}\sup_{\vartheta\in \mathbb{K}}\left|\mathbf{E}_{\vartheta}f\left(\sqrt{N}\left(\hat{\vartheta}_N-\vartheta\right)-\mathbf{E}f(\xi)\right)\right|=0,\, \forall f\in \mathcal{C}_b\,,
$$
where $\xi$ is a zero mean Gaussian random variable with variance $\mathcal{I}^{-1}(\vartheta)$ defined in Theorem \ref{optimal input}.
Moreover, we have the uniform on $\vartheta\in \mathbb{K}$ convergence of the moments: for any $q>0$,
$$
\lim_{N\rightarrow \infty}\sup_{\vartheta\in \mathbb{K}}\left|\mathbf{E}_{\vartheta}\left|\sqrt{N}\left(\hat{\vartheta}_N-\vartheta\right)\right|^q -\mathbf{E}|\xi|^q                   \right|.
$$
\end{itemize}
\end{thm}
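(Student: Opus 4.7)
The plan is to rely on the Kalman filter innovation representation already used in the paper, which converts the dependent Gaussian observations $(X_n)$ into a sequence of independent Gaussian innovations $\varepsilon_n$ with variances $\sigma_n^{2}$, and then to apply the Ibragimov--Khasminskii (IH) program to the likelihood ratio. In this representation the log--likelihood is quadratic in the $\varepsilon_n$, so the score $\ell_N(\vartheta)=\partial_\vartheta\ln L(\vartheta,X^{(N)})$ is a discrete stochastic integral against the innovations and its predictable quadratic variation is the observed Fisher information. The two facts that will feed into every step are the convergence $\mathcal{I}_N(\vartheta,u_{opt})/N\to\mathcal{I}(\vartheta)>0$ from Theorem~\ref{optimal input} and the joint Gaussianity of everything in sight.

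For strong consistency I would expand $\ell_N(\hat\vartheta_N)=0$ around the true parameter and combine the strong law of large numbers for the martingale $\ell_N(\vartheta)/N$ with the almost sure limit $\mathcal{I}_N(\vartheta,u_{opt})/N\to\mathcal{I}(\vartheta)$. Continuity and strict positivity of $\mathcal{I}(\vartheta)$ on any compact $\mathbb{K}\subset(0,1)$ then yield $\hat\vartheta_N\to\vartheta$ almost surely by a standard Fisher-information contrast argument.

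The remaining three statements are obtained simultaneously from the IH theorem applied to the normalized likelihood ratio
\[
Z_N(v)=\frac{L\bigl(\vartheta+v/\sqrt{N},X^{(N)}\bigr)}{L(\vartheta,X^{(N)})},\qquad v\in U_N:=\bigl\{v:\vartheta+v/\sqrt{N}\in(0,1)\bigr\}.
\]
I would verify the two standard hypotheses of IH: \textbf{(H1)} a uniform Hölder bound
$\sup_{\vartheta\in\mathbb{K}}\mathbf{E}_\vartheta\bigl|Z_N^{1/2}(v_1)-Z_N^{1/2}(v_2)\bigr|^{2}\le C(1+R)^{m}|v_1-v_2|^{2}$
for $|v_i|\le R$, and \textbf{(H2)} a uniform exponential decay
$\sup_{\vartheta\in\mathbb{K}}\mathbf{E}_\vartheta Z_N^{1/2}(v)\le e^{-c|v|^{2}}$
with $c>0$ independent of $N$. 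Together with the finite-dimensional weak convergence of $Z_N(v)$ to $\exp\{v\zeta-\tfrac{1}{2}v^{2}\mathcal{I}(\vartheta)\}$, with $\zeta\sim\mathcal{N}(0,\mathcal{I}(\vartheta))$, these two conditions deliver, in one package, uniform consistency on compacts, uniform on compacts asymptotic normality, and convergence of all polynomial moments uniformly in $\vartheta\in\mathbb{K}$. Finite-dimensional convergence itself reduces to a martingale CLT for $\ell_N(\vartheta)/\sqrt N$, whose Lindeberg condition is automatic from the Gaussian innovations and the normalization $\mathcal{I}_N/N\to\mathcal{I}(\vartheta)$.

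The main obstacle is the verification of (H1) and (H2). Because $\ln Z_N(v)$ is a quadratic form in jointly Gaussian innovations, its Laplace transform factors into determinants of perturbed covariance operators, so the same Laplace-transform calculus developed for Theorem~\ref{optimal input} produces explicit upper bounds. The delicate point is that the coefficients of this quadratic form depend on the shifted value $\vartheta+v/\sqrt N$, and the bounds must be made uniform over $\vartheta\in\mathbb{K}$ and over $v$ in growing compacts. This is where the restriction $\mathbb{K}\subset(0,1)$ becomes essential: keeping $\vartheta$ away from $1$ preserves stability of the perturbed system, and then the regularity of $\sigma_n$ and $k(n,m)$ inherited from the spectral integrability assumption~\eqref{spectral density} gives the uniform-in-$n$ control that (H1)--(H2) require.
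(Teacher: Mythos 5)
Your plan is sound and rests on the same analytic engine as the paper --- the explicit Laplace transform of Gaussian quadratic functionals computed through a Riccati equation --- but it is organized quite differently. The paper never invokes the Ibragimov--Khasminskii program explicitly: since the score is exactly linear in $\vartheta$, it writes $\hat{\vartheta}_N-\vartheta=M_N/\langle M\rangle_N$ with no Taylor remainder, proves $\langle M\rangle_N/N\to\mathcal{I}(\vartheta)$ in probability by computing the limit of the Laplace transform $\mathbf{E}_{\vartheta}\exp\left(-\frac{\mu}{2N}\langle M\rangle_N\right)$ via the closed-form Riccati representation of Lemma \ref{formula Laplace} and Lemma \ref{le:conv laplace}, deduces asymptotic normality from the martingale CLT, and gets strong consistency by evaluating the same Laplace transform at a fixed $\mu>0$. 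Your IH route is the one that actually delivers the full strength of the statement --- uniformity over compacts and convergence of moments of every order --- which the paper's written argument only asserts (a pointwise martingale CLT does not by itself give uniform convergence of moments). Conversely, the step you flag as the main obstacle, namely verifying (H1) and (H2) uniformly in $\vartheta\in\mathbb{K}$ and in the local parameter $v$, is precisely where the paper's machinery would have to be re-used: $\mathbf{E}_{\vartheta}Z_N^{1/2}(v)$ and $\mathbf{E}_{\vartheta}\bigl|Z_N^{1/2}(v_1)-Z_N^{1/2}(v_2)\bigr|^{2}$ are again Laplace transforms of quadratic forms in the innovations, so the determinant/Riccati formula of Lemma \ref{formula Laplace} yields the required exponential bounds once one checks that the Riccati solution $\gamma(n,n)$ and the deterministic mean term stay bounded uniformly for $\vartheta$ in a compact subset of $(0,1)$; without carrying out that uniformity check your proof is a correct skeleton rather than a complete argument, but the skeleton is the right one. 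Two minor remarks: for strong consistency you do not need a contrast argument, since the exact identity $\hat{\vartheta}_N-\vartheta=M_N/\langle M\rangle_N$, the divergence $\langle M\rangle_N\to\infty$ a.s., and the strong law for square-integrable martingales already suffice (this is essentially what the paper's fixed-$\mu$ Laplace transform argument encodes); and your Gaussian shift limit must account for the deterministic input, whose contribution $1/(1-\vartheta)^{2}$ enters $\mathcal{I}(\vartheta)$ through the mean of $\zeta_n$, so the constant $c$ in (H2) has to be extracted from the full $\mathcal{I}(\vartheta)$ and not only from its stochastic part $1/(1-\vartheta^{2})$.
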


\section{Simulation study}\label{simulation}
In this section, Monte-Carlo simulations are done for the asymptotical normality of the MLE $\hat{\vartheta}_N$ with different Gaussian noise such as AR(1), MA(1) and fractional Gaussian noise(fGn). However, $\beta_n$ defined in \eqref{beta} for the ARMA case is explicit, we can easily obtain the result of MLE, so we just simulate the fGn with $H=0.65$ and different $\vartheta$ positive and negative.

The covariance function of fGn is 
$$
\rho(|n-m|)=\frac{1}{2}(|m-n+1|^{2H}-2|m-n|^{2H}+|m-n-1|^{2H})
$$
for a known Hurst exponent $H\in (0,1)$. For this simulation we use Wood and Chan method in \cite{Yajima85} and the asymptotical normality of the estimation error $\hat{\vartheta}_N-\vartheta$ follows

\begin{center}
\resizebox{80mm}{40mm}{\includegraphics{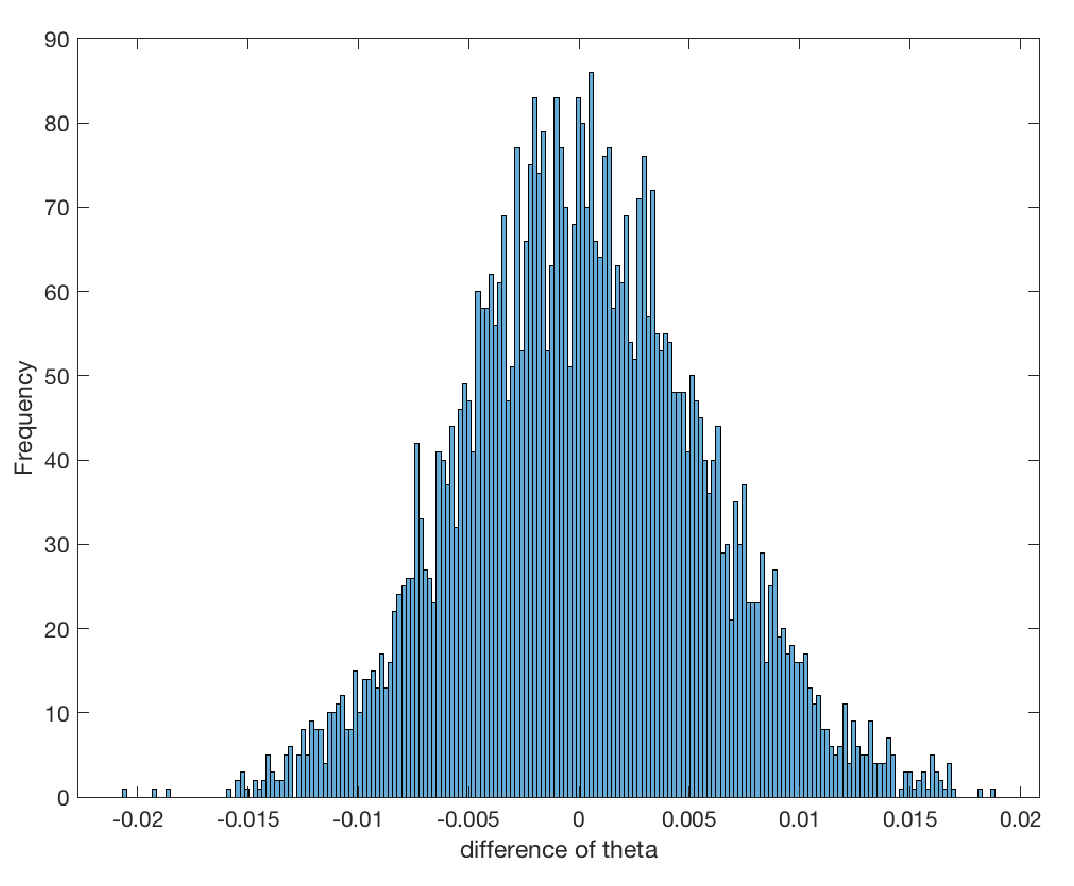}}\\
Fig.1. Histogram of the statistic $\Phi(N,\vartheta,X)$ with $\vartheta=-0.7$.
\end{center}

\begin{center}
\resizebox{80mm}{40mm}{\includegraphics{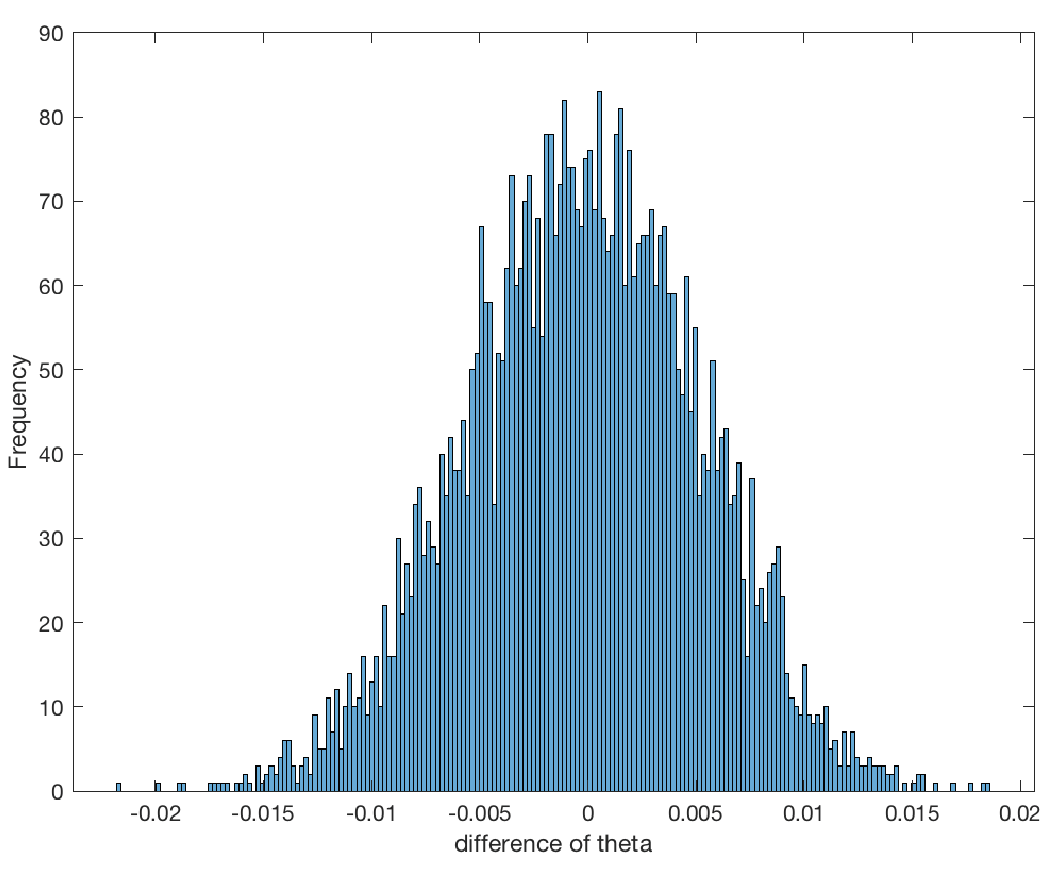}}\\
Fig.2. Histogram of the statistic $\Phi(N,\vartheta,X)$ with $\vartheta=0.7$.
\end{center}

\begin{rem}
In this simulation, the function $u_{opt}(n)$ is just a deterministic function. Its contribution here will be the value of Fisher Information but not others.
\end{rem}

\section{Appendix}
The appendix provides the proofs of Theorem \ref{optimal input} and Theorem \ref{asymptotical normal}. Without special note we only consider of $0<\vartheta<1$, for $-1<\vartheta<0$ the proofs will be the same.
\subsection{Proof of Theorem \ref{optimal input} }
To prove the theorem \ref{optimal input}, we separate the Fisher Information of \eqref{Fisher Information 1} into two parts:
\begin{eqnarray*}
\mathcal{I}_N(\vartheta,v) &=& \mathbf{E}_{\vartheta}\sum_{n=1}^{N-1} \left(\frac{a_{n}^*\zeta_{n}-a_{n}^*\mathbf{E}_{\vartheta}\zeta_{n}+a_{n}^*\mathbf{E}_{\vartheta}\zeta_{n}}{\sigma_{n+1}}\right)^2 \\
&=&\mathbf{E}_{\vartheta}\sum_{n=1}^{N-1} \left(\frac{a_{n}^*(\zeta_{n}-\mathbf{E}_{\vartheta}\zeta_{n})}{\sigma_{n+1}}   \right)^2+\sum_{n=1}^{N-1}\left(\frac{a_{n}^*\mathbf{E}_{\vartheta}\zeta_{n}}{\sigma_{n+1}}    \right)^2\\
&=&\mathbf{E}_{\vartheta}\sum_{n=1}^{N-1} \left(\frac{a_{n}^*\mathcal{P}^{\vartheta}_n}{\sigma_{n+1}}   \right)^2+\sum_{n=1}^{N-1}\left(\frac{a_{n}^*\mathbf{E}_{\vartheta}\zeta_{n}}{\sigma_{n+1}}    \right)^2\\
&=&\mathcal{I}_{1,N}(\vartheta)+\mathcal{I}_{2,N}(\vartheta,v).
\end{eqnarray*}
where $\mathcal{P}_n^{\vartheta}$ satisfies the following equation:
$$
\mathcal{P}^{\vartheta}_n=A_{n-1}^{\vartheta}\mathcal{P}^{\vartheta}_{n-1}+b\sigma_n\epsilon_n,\, \mathcal{P}^{\vartheta}_0=\mathbf{0}_{2\times 1}.
$$
Obviously, $\mathcal{I}_{1}(\vartheta)$ does not depends on $v(n)$. Thus, and as presented in \cite{BCK}, we have 
$$
\lim_{N\rightarrow \infty}\mathbf{E}_{\vartheta}\exp\left(-\frac{1}{2N}\sum_{n=1}^{N-1}\frac{(a_n^*\mathcal{P}^{\vartheta}_n)^2}{\sigma_{n+1}^2} \right)=\exp\left(-\frac{1}{2}\mathcal{I}_1(\vartheta)\right)
$$
and $\mathcal{I}_1(\vartheta)=\frac{1}{1-\vartheta^2}$.

A standard calculation yields
\begin{equation}\label{I1}
\lim_{N\rightarrow \infty}\frac{\mathcal{I}_{1,N}(\vartheta)}{N}=\mathcal{I}_1(\vartheta)=\frac{1}{1-\vartheta^2}.
\end{equation}

To compute $\mathcal{I}_{2,N}(\vartheta)$, let $s(n)=\frac{\mathbf{E}_{\vartheta}\zeta_n}{\sigma_{n+1}}$. Then, we can see that $s(n)$ satisfies the following equation:
\begin{equation}\label{eq:s}
s(n)=A_{n-1}s(n-1)\frac{\sigma_n}{\sigma_{n+1}}+bf(n)\,,
\end{equation}
where $f(n)=\frac{v(n)}{\sigma_{n+1}}$ and it is bounded.

Note that $\beta_n\rightarrow 0$ and
$\frac{\sigma_n}{\sigma_{n+1}}\rightarrow 1$, we assume that for $n=1,2,\cdots$, $\frac{\sigma_n}{\sigma_{n+1}}\leq (1+\varepsilon)$ and $\beta_n\leq \varepsilon$ for the sufficiently small positive constant $\varepsilon$ and $(1+\varepsilon)\vartheta<1$. Consequently, we can state the following result.

\begin{lem}Let $Y=(Y_n,\, n\geq 1)$ be the 2-dimension equation, which  satisfies the following equation:
$$
Y_n=\left(\begin{array}{cc}\vartheta & 0 \\0 & 1\end{array}\right)Y_{n-1}+bf(n),\, Y(0)=y_0\,.
$$

Then, we have
$$
\lim_{N \rightarrow \infty}\frac{1}{N}\sum_{n=1}^N(a_n^*s(n))^2=\lim_{N\rightarrow \infty}\frac{1}{N}\sum_{n=1}^N (b^*Y_n)^2\,.
$$

\begin{proof}
For the sake of notational simplicity, we introduce a 2-dimensional equation $Y=(Y'_n,\, n\geq 1)$, which satisfies the following equation:
$$
Y'_n=\left(\begin{array}{cc}\vartheta & 0 \\0 & 1\end{array}\right)Y_{n-1}\frac{\sigma_n}{\sigma_{n+1}}+bf(n),\, Y'_0=y'_0\,.
$$

In this situation, we have three comparison. First, we compare $b^*s(n)-b^*Y'_n$. A standard calculation implies
$$
s(n)-Y'_n=\left(\begin{array}{cc}0 & \vartheta \beta_{n-1} \\\beta_{n-1} & 0\end{array}\right)(s(n-1)-Y'_{n-1})\frac{\sigma_n}{\sigma_{n+1}},
$$
which implies $b^*(s(n)-Y'_n)\rightarrow 0$.

Now, we compare $b^*Y_n-b^*Y'_n$. A simple calculation shows that
$$
b^*(Y'_n-Y_n)=\vartheta(b^*(Y'_n-Y_n))+\vartheta\left(\frac{\sigma_n}{\sigma_{n+1}}-1\right)b^*Y'_{n-1}\,,
$$
which implies $b^*(Y'_n-Y_n)\rightarrow 0$ since $f(n)$ is bounded and $\frac{\sigma_n}{\sigma_{n+1}}-1\rightarrow 0$.

Finally, as $\beta_n\rightarrow 0$ and the component of $s(n)$ is bounded, we can easily obtain $a_n^*s(n)-b^*s(n)\rightarrow 0$, which achieves the proof.
\end{proof}
\end{lem}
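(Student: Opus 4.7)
The plan is to pass from the complicated recursion for $s(n)$ to the clean diagonal recursion for $Y_n$ by inserting an auxiliary process that matches the scaling of $s$ but has a diagonal transition matrix, and then to show that both the off-diagonal perturbation and the scaling factor are asymptotically negligible. Concretely, first I would introduce an intermediate 2-dimensional sequence $Y'_n$ defined by
\begin{equation*}
Y'_n = \begin{pmatrix} \vartheta & 0 \\ 0 & 1 \end{pmatrix} Y'_{n-1} \frac{\sigma_n}{\sigma_{n+1}} + b f(n),\qquad Y'_0 = y_0,
\end{equation*}
so that $Y'$ differs from $s$ only through the off-diagonal entries of $A_{n-1}$, and differs from $Y$ only through the scaling factor $\sigma_n/\sigma_{n+1}$.

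The proof then proceeds in three comparison steps. First, subtracting the recursions for $s(n)$ and $Y'_n$ gives an equation whose forcing matrix has entries proportional to $\beta_{n-1}$ and $\vartheta\beta_{n-1}$; since $\beta_n \to 0$ and $(1+\varepsilon)\vartheta < 1$, a standard Gronwall-type argument on this linear difference equation shows $b^*(s(n)-Y'_n) \to 0$. Second, subtracting the recursions for $Y'_n$ and $Y_n$ and projecting onto the first coordinate yields a scalar AR(1)-type relation in $b^*(Y'_n - Y_n)$ driven by $\vartheta\bigl(\sigma_n/\sigma_{n+1}-1\bigr) b^* Y'_{n-1}$; since $f$ is bounded so $Y'$ has bounded first coordinate, and the driving term vanishes, $b^*(Y'_n - Y_n) \to 0$. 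Third, writing $a_n^* s(n) - b^* s(n) = \beta_n\, s_2(n)$ and invoking boundedness of the second component of $s(n)$ together with $\beta_n \to 0$ gives $a_n^* s(n) - b^* s(n) \to 0$.

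Chaining the three estimates yields $a_n^* s(n) - b^* Y_n \to 0$ as $n \to \infty$. Using the identity $x^2 - y^2 = (x-y)(x+y)$ together with the uniform boundedness of both $a_n^* s(n)$ and $b^* Y_n$, the Cesàro averages of the squares differ by a quantity that tends to $0$, which gives the claimed equality of limits.

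The step I expect to be the main obstacle is verifying the boundedness hypotheses invoked along the way, particularly that the second component $s_2(n)$ remains bounded. This coordinate obeys a recursion with coefficient $\sigma_n/\sigma_{n+1}$ which is only bounded by $1+\varepsilon$ rather than by $1$, so at face value it could grow geometrically; the control must come from the product $\prod (1-\beta_m^2)$ stabilizing (equivalently, from the summability of $\beta_m^2$ implicit in the regularity condition \eqref{spectral density}) so that $\sigma_n$ converges to a positive limit and the ratios $\sigma_n/\sigma_{n+1}$ are summably close to $1$. Once this uniform-in-$n$ boundedness is in place, the three comparison arguments are routine difference-equation estimates.
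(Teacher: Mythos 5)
Your proposal follows essentially the same route as the paper: the same intermediate process $Y'_n$ with diagonal transition matrix and scaling factor $\sigma_n/\sigma_{n+1}$, the same three pairwise comparisons ($s$ vs.\ $Y'$, $Y'$ vs.\ $Y$, and $a_n^*s(n)$ vs.\ $b^*s(n)$), and the same conclusion via vanishing differences. If anything, you are more explicit than the paper about the two points it leaves implicit --- the $x^2-y^2=(x-y)(x+y)$ step needed to pass from pointwise convergence of the differences to equality of the Ces\`aro limits, and the boundedness of the second component of $s(n)$, which the paper simply asserts.
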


Now, we define $\alpha(n)=b^*Y_n$. Then $\alpha(n)=\vartheta \alpha(n-1)+f(n)$, where  $f(n)$ is in the space of $\mathcal{F}_N=\left\{f(n)\big|\frac{1}{N}\sum\limits_{n=1}^Nf^2(n)\leq 1\right\}$. Since the initial value $\alpha(0)$ will not change our result, we assume $\alpha(0)=0$ without loss of generality.

Let $\mathcal{J}_{2,N}=\sup\limits_{v\in \mathcal{V}_N}\mathcal{I}_{2,N}(\vartheta,v)$. Then, it is clear that
\begin{equation}\label{J2N}
\lim_{N\rightarrow\infty}   \frac{\mathcal{J}_{2,N}(\vartheta)}{N-1}=\lim_{N\rightarrow \infty}\frac{1}{N}\sup\limits_{f\in \mathcal{F}_N}\sum_{n=1}^N\alpha^2(n).
\end{equation}
Now to prove theorem \ref{optimal input}, we only need the following lemma.

\begin{lem}
As $\mathcal{J}_{2,N}(\vartheta)$ presented in \eqref{J2N}, we have
\begin{equation}\label{J2}
\lim_{N\rightarrow \infty}\frac{\mathcal{J}_{2,N}(\vartheta)}{N-1}=\mathcal{I}_2(\vartheta)\,,
\end{equation}
where $\mathcal{I}_2(\vartheta)=\frac{1}{(1-\vartheta)^2}$.
\end{lem}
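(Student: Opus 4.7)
The plan is to reduce the supremum to a norm computation on a triangular convolution operator and then bound the norm by an explicit pair of inequalities (one for the upper bound, one witnessed by a constant input for the lower bound).

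First, since $\alpha(0)=0$ and $\alpha(n)=\vartheta\alpha(n-1)+f(n)$, iterating gives the explicit representation
\[
\alpha(n)=\sum_{k=1}^{n}\vartheta^{n-k}f(k),\qquad n\ge 1.
\]
This expresses the map $f\mapsto\alpha$ as a lower-triangular Toeplitz convolution with kernel $(\vartheta^{j})_{j\ge 0}$. The supremum in \eqref{J2N} is then a ratio $\|Lf\|_2^2/\|f\|_2^2$ maximized over $\mathcal{F}_N$, so it equals the squared operator norm of the triangular matrix $L_{n,k}=\vartheta^{n-k}\mathbf{1}_{k\le n}$ on $\mathbb{R}^N$. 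The target value $1/(1-\vartheta)^2$ is precisely the square of the $\ell^1$-norm of the convolution kernel, i.e.\ $\bigl(\sum_{j\ge 0}\vartheta^j\bigr)^2$, which corresponds to the supremum over the unit circle of the symbol $1/(1-\vartheta e^{-i\omega})$ evaluated at $\omega=0$.

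For the upper bound I would apply the Cauchy–Schwarz trick of splitting $\vartheta^{n-k}=\vartheta^{(n-k)/2}\cdot\vartheta^{(n-k)/2}$, giving
\[
\alpha^2(n)\;\le\;\Bigl(\sum_{k=1}^{n}\vartheta^{n-k}\Bigr)\Bigl(\sum_{k=1}^{n}\vartheta^{n-k}f^2(k)\Bigr)\;\le\;\frac{1}{1-\vartheta}\sum_{k=1}^{n}\vartheta^{n-k}f^2(k).
\]
Summing over $n=1,\ldots,N$ and interchanging the order of summation (a geometric sum over $n$ inside) yields
\[
\sum_{n=1}^{N}\alpha^2(n)\;\le\;\frac{1}{(1-\vartheta)^2}\sum_{k=1}^{N}f^2(k)\;\le\;\frac{N}{(1-\vartheta)^2},
\]
uniformly over $f\in\mathcal{F}_N$. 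Dividing by $N-1$ and letting $N\to\infty$ gives $\limsup\mathcal{J}_{2,N}(\vartheta)/(N-1)\le 1/(1-\vartheta)^2$.

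For the lower bound I would exhibit the constant input $f\equiv 1$, which obviously lies in $\mathcal{F}_N$. Then $\alpha(n)=(1-\vartheta^n)/(1-\vartheta)$, so
\[
\frac{1}{N}\sum_{n=1}^{N}\alpha^2(n)=\frac{1}{N(1-\vartheta)^2}\sum_{n=1}^{N}\bigl(1-2\vartheta^n+\vartheta^{2n}\bigr),
\]
and the two geometric sums contribute $O(1)$ while the leading term is $N/(1-\vartheta)^2$. Hence $\liminf\mathcal{J}_{2,N}(\vartheta)/(N-1)\ge 1/(1-\vartheta)^2$. Combining the two bounds proves \eqref{J2}. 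There is no real obstacle here: both inequalities are elementary, and the only minor care point is keeping track of the normalization by $N$ versus $N-1$ in the definitions of $\mathcal{F}_N$ and $\mathcal{J}_{2,N}$, which is absorbed into the $o(1)$ terms as $N\to\infty$.
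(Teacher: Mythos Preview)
Your proof is correct and considerably more elementary than the paper's. Both arguments obtain the lower bound in the same way, by plugging in the constant input $f\equiv 1$ and computing the Ces\`aro average of $\bigl((1-\vartheta^n)/(1-\vartheta)\bigr)^2$. For the upper bound, however, the paper does not use your Cauchy--Schwarz (Schur-test) splitting. Instead it rewrites $\frac{1}{N}\sum_n\alpha^2(n)$ as a quadratic form $\sum_{i,j}F_N(i,j)\frac{f(i)}{\sqrt N}\frac{f(j)}{\sqrt N}$, identifies the kernel $F_N$ as the covariance of the time-reversed AR(1) sequence $\phi_{n-1}=\vartheta\phi_n+\epsilon_{n-1}$, $\phi_N=0$, and bounds the top eigenvalue $\nu_1(N)$ by computing the Laplace transform $L_N(a)=\mathbf{E}\exp\bigl(-\tfrac{a}{2}\sum_i\phi_i^2\bigr)$ explicitly as a $2\times 2$ transfer-matrix product; the fact that $L_N(a)$ remains well defined for all $a\ge -(1-\vartheta)^2$ then forces $\nu_1(N)\le 1/(1-\vartheta)^2$. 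Your route is shorter, self-contained, and reads off the sharp constant directly as the squared $\ell^1$-norm of the convolution kernel; the paper's route, while heavier, stays within the Laplace-transform machinery that is used throughout the rest of the article and is the device the authors reuse in the subsequent remark to handle $-1<\vartheta<0$.
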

\begin{proof}
First of all, taking $f(n)=1$, it is easy to get the lower bound
$$
\lim_{N\rightarrow \infty}\frac{1}{N}\sum_{n=1}^N\alpha^2(n)\geq \frac{1}{(1-\vartheta)^2}.
$$

Moreover, a simple calculation shows that
$$
\alpha(n)=\varphi(n)\sum\limits_{i=1}^{n}\varphi^{-1}(i)f(i),\, n\geq 1,\, \alpha(0)=0\,,
 $$
where
$$
\varphi(n)=\vartheta \varphi(n-1), \, \varphi(0)=1\,.
$$

Obviously, we can rewrite $\frac{1}{N}\sum_{n=1}^N\alpha^2(n)$ as
$$
\frac{1}{N}\sum_{n=1}^N\alpha^2(n)=\sum_{n=1}^{N}\left(\varphi(n)\sum_{i=1}^n\varphi^{-1}(i)\frac{f(i)}{\sqrt{N}}\right)\left(\varphi(n)\sum_{i=1}^n\varphi^{-1}(i)\frac{f(i)}{\sqrt{N}}\right).
$$
or
$$
\frac{1}{N}\sum_{n=1}^N\alpha^2(n)=\sum_{i=1}^{N}\sum_{j=1}^{N}F_{N}(i,j)\frac{f(i)}{\sqrt{N}}\frac{f(j)}{\sqrt{N}},
$$
where
$$
F_{N}(i,j)=\sum_{\ell=i\bigvee j}^{N}\left(\varphi(\ell)\varphi^{-1}(i)\right)\left(\varphi(\ell)\varphi^{-1}(j)\right)\,.
$$

Let $\phi_n=\varphi^{-1}(n)\sum\limits_{\ell=n}^{N}\varphi_{\ell}\epsilon_{\ell}$ with $\epsilon_{\ell}\sim\mathcal{N}(0,1)$ are independent. Then, we have
$$
F_{N}(i,j)=\mathbf{E}(\phi_i\phi_j)
$$
and
$$
\phi_{n-1}=\vartheta \phi_n+\epsilon_{n-1},\, \phi_{N}=0.
$$

Let us mentioned that $F_{N}(i,j)$ is a compact symmetric operator for fixed $N$. We should estimate the spectral gap (the first eigenvalue $\nu_1(N)$) of the operator. The estimation of the spectral gap is based on the Laplace transform of $\sum\limits_{i=1}^{N}\phi_i^2$, which is written as
$$
L_{N}(a)=\mathbf{E}_{\vartheta}\exp \left(-\frac{a}{2}\sum_{i=1}^{N}\phi_i^2\right)\,,
$$
for sufficiently small negative $a<0$. On one hand, when $a>-\frac{2}{\nu_1(N)}$, $\phi$ is a centred Gaussian process with covariance operator $F_{N}$. Using Mercer's theorem and Parseval's identity, $L_T(a)$ can be represented by
\begin{equation}\label{Laplace transform 1}
L_N(a)=\prod_{i \geq 1}(1+a\nu_i(N))^{-1/2}\,,
\end{equation}
where $\nu_i(N)$ is the sequence of positive eigenvalues of the covariance operator.  A straightforward algebraic calculation shows
\begin{equation}\label{Laplace transform 2}
L_N(a)=\left(\vartheta^{N-1}\Psi_{N}^1\right)^{-1/2}\,,
\end{equation}
where
$$
\Psi_{N}=\left(\begin{array}{cc}1 & 0\end{array}\right)\left(\begin{array}{cc}\frac{1}{\vartheta} & \frac{1}{\vartheta} \\\frac{a}{\vartheta} & \frac{a}{\vartheta}+\vartheta\end{array}\right)^{N-1}\left(\begin{array}{c}1 \\0\end{array}\right).
$$
For
$$
\Delta=\left(\frac{1+a}{\vartheta}+\vartheta \right)^2-4\geq 0\,,
$$
there exists two real eigenvalues $\lambda_2\geq 1,\, \lambda_1\leq 1$ of the matrix
$$
\left(\begin{array}{cc}\frac{1}{\vartheta} & \frac{1}{\vartheta} \\\frac{a}{\vartheta} & \frac{a}{\vartheta}+\vartheta\end{array}\right).
$$

Then, we can see that
$$
\Psi_N=\left(\frac{\lambda^{N-1}-\lambda^{N-1}}{\vartheta^2}-\frac{\lambda_2^{N-2}-\lambda_1^{N-2}}{\vartheta}\right)\frac{\vartheta}{\lambda_2-\lambda_1}\geq 0\,.
$$

That is to say for $\vartheta>0$ and for any $0>a\geq -(1-\vartheta)^2$, $L_N(a)\geq 0$. Thus, $\lim\limits_{N\rightarrow \infty}\nu_1(N)\leq \frac{1}{(1-\vartheta)^2}$ and we complete the proof.
\end{proof}

\begin{rem}
For $-1<\vartheta<0$, $\Delta\geq 0$ means $\frac{1+a}{\vartheta}+\vartheta\leq -2$ and $0>a>-(1+\vartheta)^2$. As a consequence, we have $\nu_1(N)\leq (1+\vartheta)^2$.
\end{rem}

\subsection{Proof of theorem \ref{asymptotical normal}}
Let $v_{opt}(n)=\sigma_{n+1}$ and $\zeta^o=(\zeta^o_n,\, n\geq 1)$ be the process $\zeta$ with the function $v_{opt(n)}$. Then, we have
$$
\zeta^o_n=A_{n-1}\zeta^o_{n-1}+bv_{opt}(n)+b\sigma_n\epsilon_n,\,\,\, \zeta_0=\mathbf{0}_{2\times 1}
$$

To estimate the parameter $\vartheta$ from the observations $\zeta_1, \zeta_2,\,\cdots,\, \zeta_N$, we can write the MLE of $\vartheta$ uing 
\eqref{likelihood function}
\begin{equation}\label{eq:MLE}
\hat{\vartheta}_N=\left(\sum_{n=1}^N\left(\frac{a_n^*\zeta_n}{\sigma_{n+1}}\right)^2 \right)^{-1}\left(\sum_{n=1}^N\frac{a_n^*\zeta_nb^*\zeta_{n+1}}{\sigma_{n+1}^2}  \right)\,.
\end{equation}

A standard calculation yields
$$
\hat{\vartheta}_N-\vartheta=\frac{M_N}{\langle M\rangle_N}\,,
$$
where
$$
M_N=\sum_{n=1}^N\frac{a_n^*\zeta_n}{\sigma_{n+1}}\epsilon_{n+1},\,\,\,\, \langle M\rangle_N=\sum_{n+1}^N\left(\frac{a_n^*\zeta_n}{\sigma_{n+1}}\right)^2\,.
$$

The second and third conclusion about the asymptotically normality of theorem \ref{asymptotical normal} are crucially based on the asymptotical study of the Laplace transform
$$
\mathcal{L}_N^{\vartheta}\left(\frac{\mu}{N}\right)=\mathbf{E}_{\vartheta}\exp\left(-\frac{\mu}{2N}\langle M\rangle_N \right)\,,
$$
for $N\rightarrow \infty$.

First, we can rewrite $\mathcal{L}^{\vartheta}_N\left(\frac{\mu}{N}\right)$ by the following formula:
$$
\mathcal{L}^{\vartheta}_N\left(\frac{\mu}{N}\right)=\mathbf{E}_{\vartheta}\exp\left(-\frac{1}{2}\sum_{n=1}^N\zeta_n^*\mathcal{M}_n\zeta_n   \right)\,,
$$
where $\mathcal{M}_n=\frac{\mu}{N\sigma_{n+1}^2}a_na_n*$.

As presented in \cite{BCK} and using the Cameron-Martin formula \cite{KLV02}, we have the following result.

\begin{lem}\label{formula Laplace}
For any $N$, the following equality holds:
$$
\mathcal{L}^{\vartheta}_N\left(\frac{\mu}{N}\right)=\prod_{n=1}^N[\det(\mathbf{Id}+\gamma(n,n)\mathcal{M}_n)]^{-1/2}\exp\left(-\frac{1}{2}\sum_{n=1}^Nz_n^*\mathcal{M}_n (\mathbf{Id}+\gamma(n,n)\mathcal{M}_n)^{-1}z_n         \right)\,,
$$
where $(\gamma(n,m),\, 1\leq m\leq n)$ is the unique solution of the equation
\begin{equation}\label{eq:gamma n m}
\gamma(n,m)=\left[\prod_{r=m+1}^n A_{r-1}(\mathbf{Id}+\gamma(r,r)\mathcal{M}_r)^{-1}\right]\gamma(m,m),
\end{equation}
 and the function $(\gamma(n,n,\,n\geq 1))$ is the solution of the Ricatti equation:
\begin{equation}\label{eq:gamma}
\gamma(n,n)=A_{n-1}(\mathbf{Id}+\gamma(n-1,n-1)\mathcal{M}_{n-1})^{-1}\gamma(n-1,n-1)A_{n-1}^*+\sigma_n^2bb^*.
\end{equation}

It is worth to emphasize that $(z_n,\, 1\leq n\leq N)$ is the unique solution of the equation
$$
z_n=m_n-\sum_{r=1}^{n-1}\gamma(n,r)[\mathbf{Id}+\gamma(r,r)\mathcal{M}_r]^{-1}\mathcal{M}_rz_r,\,z_0=m_0.
$$
where $m_n=\mathbf{E}\zeta^o_n$.
\end{lem}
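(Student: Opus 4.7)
The plan is to interpret $\mathcal{L}_N^{\vartheta}(\mu/N)$ as the partition function of a linear Gaussian state-space model in which the Markov chain $\zeta_n$ from \eqref{eq:zeta} is the latent state and each factor $\exp(-\tfrac{1}{2}\zeta_n^*\mathcal{M}_n\zeta_n)$ is read as a Gaussian pseudo-observation at time $n$ with precision $\mathcal{M}_n$ and observed value $0$. Under this reading, $\gamma(n,n)$ plays the role of the one-step predictive covariance of $\zeta_n$ given the pseudo-observations at times $1,\ldots,n-1$ and $z_n$ plays the role of the corresponding predictive mean, while the auxiliary quantities $\gamma(n,m)$ from \eqref{eq:gamma n m} collect the products of the update-and-predict operators $A_{r-1}(\Id+\gamma(r,r)\mathcal{M}_r)^{-1}$ that link step $m$ to step $n$. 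The Riccati recursion \eqref{eq:gamma} is then precisely the Kalman update-then-predict step: the information update replaces $\gamma$ by $(\Id+\gamma\mathcal{M})^{-1}\gamma$, and the prediction adds $\sigma_n^2 bb^*$ after propagation through $A_{n-1}$.

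To turn this heuristic into a proof I would argue by induction on $N$. The base case $N=1$ reduces to the Gaussian Laplace identity $\mathbf{E}\exp(-\tfrac{1}{2}Y^*\mathcal{M}Y)=[\det(\Id+\Sigma\mathcal{M})]^{-1/2}\exp(-\tfrac{1}{2}m^*\mathcal{M}(\Id+\Sigma\mathcal{M})^{-1}m)$ applied to $Y=\zeta_1\sim\mathcal{N}(m_1,\sigma_1^2 bb^*)$; it matches the claim once we set $\gamma(1,1)=\sigma_1^2 bb^*$ and $z_1=m_1$, using $\det(\Id+uv^*)=1+v^*u$ and Sherman--Morrison to handle the rank-one covariance. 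For the inductive step I would condition on $\mathcal{F}_{N-1}=\sigma(\zeta_1,\ldots,\zeta_{N-1})$, note that $\zeta_N\mid \mathcal{F}_{N-1}\sim\mathcal{N}(A_{N-1}\zeta_{N-1}+bv(N),\sigma_N^2 bb^*)$, apply the same Gaussian identity to the inner expectation, and absorb the resulting quadratic-in-$\zeta_{N-1}$ factor into a new pseudo-observation at time $N-1$. The induction hypothesis, applied with this updated $\mathcal{M}_{N-1}'$ and shifted mean, then produces a formula of the target shape with the $\gamma$'s and $z$'s generated by one additional update-and-predict cycle.

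The main obstacle is the algebraic bookkeeping that identifies these inductively generated objects with the globally defined $\gamma(n,n)$, $\gamma(n,m)$ and $z_n$ of the statement. The key identities are the Woodbury relation $(\Id+\gamma\mathcal{M})^{-1}\gamma=(\gamma^{-1}+\mathcal{M})^{-1}$, the Sherman--Morrison form of the induced mean shift, and a Schur-complement determinant factorisation that rewrites $\det(\Id+\gamma'\mathcal{M}_{n+1})$ with $\gamma'=A_n(\Id+\gamma(n,n)\mathcal{M}_n)^{-1}\gamma(n,n)A_n^*+\sigma_{n+1}^2 bb^*$ in a form that telescopes cleanly with the determinant factor at time $n$. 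The accumulated mean shift propagates through the cross quantities $\gamma(n,r)$ and produces the linear recursion for $z_n$. Once these three identities are checked stepwise, collecting the telescoping product of determinant factors and the accumulated quadratic terms delivers the stated formula.
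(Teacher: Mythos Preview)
The paper does not actually prove this lemma: it is stated with the preface ``Inspired by the Appendix of Chapter~2 of \cite{Cthesis}, we have the following result'' and no argument is given. So there is no in-paper proof to compare against; the result is imported from Cai's thesis (and is in the spirit of the Laplace-transform formulas in \cite{KLV02}).

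Your approach---reading each factor $\exp(-\tfrac12\zeta_n^*\mathcal{M}_n\zeta_n)$ as a Gaussian pseudo-observation on the Markov state $\zeta_n$ and running an induction on $N$ that amounts to the Kalman update-then-predict cycle---is exactly the standard route to such formulas and is what one expects the cited thesis argument to do. The identification of $\gamma(n,n)$ with the predictive covariance, of $z_n$ with the predictive mean, and of $\gamma(n,m)$ with the accumulated transition operators is correct, and the determinant/exponent bookkeeping you outline does telescope to the stated product form.

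One caveat worth tightening: you invoke the Woodbury identity in the form $(\Id+\gamma\mathcal{M})^{-1}\gamma=(\gamma^{-1}+\mathcal{M})^{-1}$, but $\gamma(n,n)$ is in general singular here (already $\gamma(1,1)=\sigma_1^2 bb^*$ has rank one in $\mathbb{R}^2$). The recursion \eqref{eq:gamma} and the quantities $(\Id+\gamma\mathcal{M})^{-1}\gamma$ are perfectly well defined without inverting $\gamma$, and the induction goes through using only that form together with the identity $(\Id+\gamma\mathcal{M})^{-1}\gamma = \gamma - \gamma\mathcal{M}(\Id+\gamma\mathcal{M})^{-1}\gamma$ and the matrix determinant lemma for the rank-one increment $\sigma_n^2 bb^*$. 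As long as you phrase the algebra without $\gamma^{-1}$, your plan is complete.
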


With the explicit formula of the Laplace transform presented in Lemma \ref{formula Laplace}, we have its asymptotical property.

\begin{lem}\label{le:conv laplace}
Under the condition \eqref{spectral density}, for any $\mu\in \mathbb{R}$, we have
\begin{equation}\label{conv laplace}
\lim_{N\rightarrow \infty}\mathcal{L}^{\vartheta}_N\left(\frac{\mu}{N}\right)=\exp\left(-\frac{\mu}{2}\mathcal{I}(\vartheta)\right)
\end{equation}
where $\mathcal{I}(\vartheta)=\frac{1}{1-\vartheta^2}+\frac{1}{(1-\vartheta)^2}$.
\end{lem}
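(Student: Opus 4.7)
The plan is to use the explicit representation of $\mathcal{L}^{\vartheta}_N(\mu/N)$ given in Lemma \ref{formula Laplace} and show that its two factors — the product of determinants and the exponential of the quadratic form in $z_n$ — converge separately to $\exp(-\mu\mathcal{I}_1(\vartheta)/2)$ and $\exp(-\mu\mathcal{I}_2(\vartheta)/2)$. This mirrors the splitting $\mathcal{I}_N=\mathcal{I}_{1,N}+\mathcal{I}_{2,N}$ used in the proof of Theorem \ref{optimal input}: the determinant product encodes the random fluctuation part carried by $\mathcal{P}_n^\vartheta$, while the quadratic form in $z_n$ encodes the deterministic contribution $\mathbf{E}_\vartheta \zeta_n^o$.

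For the determinant factor, the rank-one form $\mathcal{M}_n=(\mu/(N\sigma_{n+1}^2))\,a_n a_n^*$ gives, via the matrix determinant lemma,
\[
\det(\mathbf{Id}+\gamma(n,n)\mathcal{M}_n)=1+\frac{\mu}{N\sigma_{n+1}^2}\,a_n^*\gamma(n,n)a_n,
\]
which is $1+O(1/N)$; summing the logarithms and Taylor-expanding yields the Cesaro average $-(\mu/(2N))\sum_{n=1}^N a_n^*\gamma(n,n)a_n/\sigma_{n+1}^2$ up to an $o(1)$ error. Since $\mathcal{M}_n=O(1/N)$, the Riccati equation \eqref{eq:gamma} reduces to leading order to the Lyapunov recursion $\gamma(n,n)\simeq A_{n-1}\gamma(n-1,n-1)A_{n-1}^*+\sigma_n^2 bb^*$, which is precisely the covariance recursion of $\mathcal{P}_n^\vartheta$. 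Using $\beta_n\to 0$ and $\sigma_n\to\sigma_\infty$, the $(1,1)$ entry converges to $\sigma_\infty^2/(1-\vartheta^2)$ and, because $a_n=(1,\beta_n)^*$, one gets $a_n^*\gamma(n,n)a_n/\sigma_{n+1}^2\to 1/(1-\vartheta^2)=\mathcal{I}_1(\vartheta)$. Alternatively this limit is directly available from the Laplace transform for $\sum(a_n^*\mathcal{P}_n^\vartheta)^2/\sigma_{n+1}^2$ recalled just before \eqref{I1}.

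For the quadratic form in $z_n$, the defining recursion $z_n=m_n-\sum_{r<n}\gamma(n,r)(\mathbf{Id}+\gamma(r,r)\mathcal{M}_r)^{-1}\mathcal{M}_r z_r$ contains a correction linear in $\mathcal{M}_r=O(1/N)$. A Gronwall-type bound on $a_r^*z_r$ combined with control of $a_n^*\gamma(n,r)a_r$ through \eqref{eq:gamma n m} (which exhibits geometric decay $\vartheta^{n-r}$ to leading order along the relevant direction) should give $|a_n^*(z_n-m_n)|=O(1/N)$ uniformly, hence $(1/N)\sum_{n=1}^N(a_n^*(z_n-m_n)/\sigma_{n+1})^2\to 0$. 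Replacing $z_n$ by $m_n=\mathbf{E}\zeta_n^o$ and setting $s(n)=m_n/\sigma_{n+1}$, one recognises \eqref{eq:s} with $v=v_{opt}$ (so $f(n)\equiv 1$), and the comparison lemma preceding \eqref{J2N} identifies the Cesaro limit as $(b^*Y_n)^2\to 1/(1-\vartheta)^2=\mathcal{I}_2(\vartheta)$. Adding the two contributions yields $\mathcal{I}(\vartheta)=\mathcal{I}_1(\vartheta)+\mathcal{I}_2(\vartheta)$ as claimed.

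The main obstacle is the degeneracy of the limiting dynamics: as $\beta_n\to 0$ one has $A_{n-1}\to\mathrm{diag}(\vartheta,1)$, whose second eigenvalue equals $1$, so neither the Riccati recursion nor the equation for $z_n$ is uniformly contractive in every direction. The key observation that makes the argument go through is that every quantity entering the Laplace transform is paired with $a_n^*=(1,\beta_n)$ or $b=(1,0)^*$, so potentially non-contracting second components are attenuated by factors of $\beta_n$ (or vanish entirely against $b$). Combined with the $O(1/N)$ scaling of $\mathcal{M}_n$, this should render the cumulative perturbations asymptotically negligible; verifying this two-scale cancellation uniformly in $n\le N$ — in particular for the $z_n$ correction, where $\gamma(n,r)$ is a product of $n-r$ matrices only marginally contractive in one direction — is the technical heart of the proof.
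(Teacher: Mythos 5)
Your proposal follows essentially the same route as the paper: split the explicit formula of Lemma \ref{formula Laplace} into the determinant product (which gives $\exp\left(-\frac{\mu}{2(1-\vartheta^2)}\right)$, quoting the result of \cite{BCK}) and the quadratic form in $z_n$, which after replacing $z_n$ by $m_n=\mathbf{E}\zeta^o_n$ reduces to the deterministic computation of $\mathcal{I}_{2,N}$ already carried out in the proof of Theorem \ref{optimal input}. The ``technical heart'' you flag --- controlling the correction $\sum_{r<n}\gamma(n,r)(\mathbf{Id}+\gamma(r,r)\mathcal{M}_r)^{-1}\mathcal{M}_r z_r$ despite the marginally contractive second direction of $A_n$ --- is passed over by the paper as well, which simply asserts \eqref{conv 0}, so your account is if anything more explicit than the published proof.
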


\begin{proof}
In \cite{BCK} we have stated that 
\begin{equation}\label{part1}
\lim_{N\rightarrow \infty} \prod_{n=1}^N[\det(\mathbf{Id}+\gamma(n,n)\mathcal{M}_n)]^{-1/2}=\frac{\mu}{1-\vartheta^2}.
\end{equation}

Since the component of $\gamma(n,n)$ is bounded, we have 
$$
\lim_{N\rightarrow \infty} \mathbf{Id}+\gamma(n,n)\mathcal{M}_n=\mathbf{Id}.
$$

On the other hand, 
\begin{equation}\label{part2}
\sum_{n=1}^Nm_n^*\mathcal{M}_nm_n=\frac{\mu}{N}\sum_{n=1}^N\left(\frac{a_n^*\mathbf{E}\zeta^0_n}{\sigma_{n+1}}\right)^2\longrightarrow \frac{\mu}{(1-\vartheta)^2},\, N\rightarrow \infty.
\end{equation}
which has been presented in last part. At last, notice
$$
\sum_{r=1}^{n-1}\gamma(n,r)[\mathbf{Id}+\gamma(r,r)\mathcal{M}_r]^{-1}\mathcal{M}_rz_r=\sum_{r=1}^{n-1}\left[\prod_{\tau=r+1}^nA_{\tau-1}(\mathbf{Id}+\gamma(\tau,\tau)\mathcal{M}_{\tau})^{-1}   \right][\mathbf{Id}+\gamma(r,r)\mathcal{M}_r]\mathcal{M}_rz_r
$$
we have that
\begin{equation}\label{conv 0}
\lim_{N\rightarrow \infty}\sum_{r=1}^{n-1}\gamma(n,r)[\mathbf{Id}+\gamma(r,r)\mathcal{M}_r]^{-1}\mathcal{M}_rz_r=0.
\end{equation}
Combining \eqref{part1} and \eqref{part2} and \eqref{conv 0}, the Lemma  \ref{le:conv laplace} achieves.
\end{proof}

From this conclusion, it is immediate that
$$
\mathbf{P}_{\vartheta}\lim_{N\rightarrow \infty}\frac{1}{N}\langle M\rangle_N=\mathcal{I}(\vartheta)\,.
$$

Moreover, using the central limit theorem for martingale, we have
$$
\frac{1}{\sqrt{N}}M_N\Longrightarrow \mathcal{N}(0,\mathcal{I}(\vartheta)).
$$

Consequently, the asymptotical part of the theorem \ref{asymptotical normal} is obtained.

The strong consistent is immediate when we change $\frac{\mu}{N}$ with a positive proper constant $\mu$ in Lemma \ref{le:conv laplace} because the determinant part tends to $0$ as  presented in section 5.2 of \cite{BCK}and the extra part is bounded.

\paragraph{Acknowledgements} The authors would like to thank the editor and reviewers for their valuable comments which improve the manuscript.

\paragraph{Availability of data and materials} Data sharing not applicable to this article as no data sets were generated or analyzed during the current study.

\paragraph{Funding} Not applicable.

\paragraph{Competing interests} The authors declare that they have no competing interests.

\end{document}